\documentclass[12pt]{amsart}
\usepackage{amssymb,amsmath}

\textwidth=15.00cm \textheight=21.3cm \topmargin=0.00cm
\oddsidemargin=0.00cm \evensidemargin=0.00cm \headheight=14.4pt
\headsep=1cm \numberwithin{equation}{section}
\hyphenation{semi-stable} \emergencystretch=10pt

\newtheorem{theorem}{Theorem}[section]
\newtheorem{lemma}[theorem]{Lemma}
\newtheorem{proposition}[theorem]{Proposition}

\theoremstyle{definition}

\theoremstyle{remark}
\newtheorem{remark}[theorem]{Remark}
\newtheorem{example}[theorem]{Example}

\newtheorem{acknowledgement}{Acknowledgement}

\newcommand{\Ass}{\operatorname{Ass}}

\newcommand{\Dim}{\operatorname{dim}}

\newcommand{\fm}{\frak{m}}
\newcommand{\fp}{\frak{p}}

\newcommand{\fa}{\frak{a}}
\newcommand{\fb}{\frak{b}}

\begin{document}

\dedicatory{Dedicated to Professor Winfried Bruns
on the occasion of his 70th birthday}
\author[Mafi]{Amir Mafi}
\title[Ratliff-Rush ideal and reduction numbers]
{Ratliff-Rush ideal and reduction numbers}

\address{A. Mafi, Department of Mathematics, University of Kurdistan, P.O. Box: 416, Sanandaj,
Iran.} \email{a\_mafi@ipm.ir}

\subjclass[2000]{13C14, 13H10, 13D40.}

\keywords{Reduction Number, Ratliff-Rush ideal.\\
This work was partially supported by a grant from the
Simons Foundation}

\begin{abstract}
Let $(R,\fm)$ be a Cohen-Macaulay local ring of positive dimension $d$ and infinite residue field. Let $I$ be an $\fm$-primary ideal and $J$ a minimal reduction of $I$. In this paper, we show that $\widetilde{r_J(I)}\leq r_J(I)$. This answer to a question that made by M.E. Rossi and I. Swanson in [\ref{Rs}, Question 4.6].
\end{abstract}

\maketitle

\section{Introduction}
Throughout this paper, we assume that $(R,\fm)$ is a Cohen-Macaulay (abbreviated to CM) local ring of positive dimension $d$ with infinite residue field and $I$ an $\fm$-primary ideal.

The ideals of the form $I^{m+1}:I^m=\{x\in R\vert\ \  xI^m\subseteq I^{m+1}\}$ increase with $m$. The union of this family first studied by Ratliff and Rush [\ref{Rr}]. Let us denote $\widetilde{I}=\cup_{m\geq 1}(I^{m+1}:I^m)$. The ideal $\widetilde{I}$ is called {\it the Ratliff-Rush ideal associated} with $I$ or {\it the Ratliff-Rush closure} of $I$. Ratliff and Rush showed that $\widetilde{I}$ is the largest ideal for which ${(\widetilde{I})}^m=I^m$ for all large $m$ and hence that $\widetilde{\widetilde{I}}=\widetilde{I}$. More generally, they proved that $\widetilde{I^m}=\cup_{k\geq 1}(I^{m+k}:I^k)$ and $\widetilde{I^m}=I^m$ for all large $m$; in particular, it holds that $$\widetilde{I^m}=\cup_{k\geq 1}(I^{m+k}:(x_1^k,...,x_d^k)),$$
where $x_1,...,x_d$ is a system of parameters contained in $I$. The ideal $I$ for which $\widetilde{I}=I$ is called {\it Ratliff-Rush closed}. There exist many ideals which are Ratliff-Rush ideals, for example, all radical and all integrally closed ideals. In [\ref{Hls}], Heinzer, Lantz and Shah showed that the depth of the associated graded ring $gr_I(R)=\bigoplus_{m\geq 0}\dfrac{I^m}{I^{m+1}}$ is positive iff all powers of $I$ are Ratliff-Rush ideals. For example, all power of an ideal is Ratliff-Rush, whenever it is generated by a regular sequence.

Recall that an ideal $J\subseteq I$ is called a reduction of $I$ if $I^{m+1}=JI^{m}$ for some non-negative integer $m$. A reduction $J$ is called a minimal reduction if $J$ is minimal with respect to inclusion and under our assumption, it is generated by a regular sequence. These concepts were first introduced and studied by Northcott and Rees [\ref{Nr}]. If $J$ is a reduction of $I$, define {\it the reduction number} of $I$ with respect to $J$, denoted by $r_J(I)$, to be $\min\{m \vert\ \  I^{m+1}=JI^m\}$. The reduction number of $I$ is defined by $r(I)=\min\{r_J(I)\vert\ \ J$ is a minimal reduction of $I\}$. The notion of minimal reduction can be given for filtrations and the extension is clear in the case of the Ratliff-Rush filtration. Since $\widetilde{I^m}=I^m$ for large $m$, a minimal reduction $J$ of $I$ is a minimal reduction with respect to the Ratliff-Rush filtration. Rossi and Swanson [\ref{Rs}] denoted by $$\widetilde{r_J(I)}=\min\{m\vert\ \ \widetilde{I^{n+1}}=J\widetilde{I^n}\ \ for\ n\geq m\},$$
and they called it the {\it Ratliff-Rush reduction number} of $I$ with respect to $J$. It is not clear whether $\widetilde{I^{m+1}}=J\widetilde{I^m}$ for some integer $m$ implies that $\widetilde{I^{n+1}}=J\widetilde{I^n}$ for all $n\geq m$. We remark in fact $\widetilde{I}\widetilde{I^m}$ is not necessarily $\widetilde{I^{m+1}}$.

Recall that an element $x$ of the ideal $I$ is said to be {\it superficial element} for $I$ if there exists a non-negative integer $k$ such that $(I^{m+1}:x)\cap I^k=I^m$ for all $m\geq k$ and so, with our assumption, there exists a non-negative integer $k_0$ such that  $(I^{m+1}:x)=I^m$ for all $m\geq k_0$. A set of elements $x_1,...,x_s\in I$ is a {\it superficial sequence} of $I$ if $x_i$ is a superficial element of $I/{(x_i,...,x_{i-1})}$ for $i=1,...,s$.
Swanson [\ref{S}] proved that if $x_1,...,x_d$ is a superficial sequence of $I$, then $J=(x_1,...,x_d)$ is a minimal reduction of $I$.
Elias [\ref{E}] defined that a superficial sequence $x_1,...,x_s$ of $I$ is {\it tame} if $x_i$ is a superficial element of $I$, for all $i=1,...,s$. Also, he proved that a tame superficial sequence always exists.

The main aim of this paper is to prove the
question that made by Rossi and Swanson in [\ref{Rs}, Question 4.6]. For any unexplained notation or terminology, we
refer the reader to [\ref{Bh}] and [\ref{Hs}].

\section{The results}
We shall need some auxiliary results. The following result was essentially proven in [\ref{K}, Lemma 3] and [\ref{C}, Theorem 2].

\begin{lemma} Let $x,x_1,...,x_s\in\fm$ be an $R$-regular sequence and $\fa=(x_1,...,x_s)$. Then the following conditions hold.
\begin{itemize}
\item[(i)]
${\fa}^{n+1}:x_i={\fa}^n$ for all $n\in\mathbb{N}$ and all $i$ $(1\leq i\leq s)$.
\item[(ii)] ${\fa}^{n}:x={\fa}^n$ for all $n\in\mathbb{N}$.
\end{itemize}
\end{lemma}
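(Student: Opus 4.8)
The plan is to exploit the fact that a regular sequence in $R$ generates an ideal whose Rees algebra behaves like a polynomial ring in the relevant degrees; equivalently, the associated graded ring $gr_{\fa}(R)$ is a polynomial ring over $R/\fa$ in $s$ variables, so it has no nonzero zerodivisors coming from the images of the $x_i$. Concretely, for part (i) I would prove the inclusion ${\fa}^{n+1}:x_i \subseteq {\fa}^n$ by induction on $n$, the reverse inclusion being trivial. The base case $n=0$ says $\fa:x_i = R$, which is immediate since $x_i\in\fa$. For the inductive step, suppose $a x_i \in {\fa}^{n+1}$; writing ${\fa}^{n+1}$ in terms of monomials in $x_1,\dots,x_s$ and using that $x_i$ is a nonzerodivisor modulo ${\fa}^n$ — which is the heart of the matter — one concludes $a\in{\fa}^n$. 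The cleanest route is: by a standard property of regular sequences (e.g.\ Bruns--Herzog), the natural surjection $(R/\fa)[T_1,\dots,T_s]\to gr_{\fa}(R)$, $T_i\mapsto \bar x_i\in\fa/\fa^2$, is an isomorphism; hence in each graded piece $\fa^n/\fa^{n+1}$ the element $\bar x_i$ acts as multiplication by a variable, which is injective. Lifting this back to $R$ gives exactly $x_i a\in\fa^{n+1}\Rightarrow a\in\fa^n$.

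For part (ii), that ${\fa}^n : x = {\fa}^n$ for an element $x$ with $x,x_1,\dots,x_s$ an $R$-regular sequence, I would argue that $x$ remains a nonzerodivisor on $R/\fa^n$. Here the key observation is that $x_1,\dots,x_s,x$ being a regular sequence is, since $R$ is Cohen--Macaulay (or more generally by the standard permutability of regular sequences in a Noetherian local ring), equivalent to $x,x_1,\dots,x_s$ being a regular sequence; so $x$ is a nonzerodivisor on $R/\fa$. One then boosts this to $R/\fa^n$: filtering $R/\fa^n$ by the powers $\fa^i/\fa^n$ yields successive quotients $\fa^i/\fa^{i+1}$ for $i=0,\dots,n-1$, each of which is a free $R/\fa$-module (again by the polynomial-ring description of $gr_\fa(R)$), hence has $x$ acting as a nonzerodivisor. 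A short induction up the filtration, using the snake lemma / the standard fact that if $x$ is a nonzerodivisor on the ends of a short exact sequence it is a nonzerodivisor on the middle, shows $x$ is a nonzerodivisor on $R/\fa^n$, which is precisely the assertion ${\fa}^n:x={\fa}^n$.

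The main obstacle — really the only nonformal input — is establishing that $gr_{\fa}(R)\cong (R/\fa)[T_1,\dots,T_s]$, i.e.\ that a regular sequence generates an ideal of linear type with the expected associated graded ring. Once that structural fact is in hand, everything else is bookkeeping with short exact sequences. Since the lemma is attributed to \cite{K} and \cite{C}, I would simply invoke this standard result (it is, for instance, a theorem of Rees, and appears in Bruns--Herzog) rather than reprove it; alternatively one can give the self-contained induction on $n$ sketched above for part (i) and deduce part (ii) from it by noting that ${\fa}^{n}:x\subseteq \bigcap_{i}({\fa}^{n}:\text{(things involving }x_i))$ is not quite what is needed, so the filtration argument is the more robust path. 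In writing it up I would present part (i) first with the inductive monomial argument, then derive part (ii) via the filtration of $R/\fa^n$ by powers of $\fa$.
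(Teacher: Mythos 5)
Your proposal is correct, but note that the paper does not actually prove this lemma at all: it simply cites Kaplansky [K, Lemma 3] and Caruth [C, Theorem 2], whose original arguments are elementary inductions on $n$ using only the definition of a regular sequence. Your route instead invokes the structural theorem (Rees; Bruns--Herzog, Theorem 1.1.8) that $gr_{\fa}(R)\cong (R/\fa)[T_1,\dots,T_s]$, from which (i) follows by passing to initial forms (if $ax_i\in\fa^{n+1}$ and $a\in\fa^j\setminus\fa^{j+1}$ with $j<n$, then $T_i\cdot a^*=0$ in the polynomial ring, a contradiction since a variable is a nonzerodivisor there), and (ii) follows from permutability of regular sequences in a Noetherian local ring (all elements lie in $\fm$, so $x$ is a nonzerodivisor on $R/\fa$) together with the filtration of $R/\fa^n$ by $\fa^i/\fa^n$, whose successive quotients $\fa^i/\fa^{i+1}$ are free $R/\fa$-modules, so that $x$ is a nonzerodivisor on $R/\fa^n$ by induction along the short exact sequences. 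All of these steps are sound; the trade-off is that you use a heavier (though entirely standard) input than the cited elementary proofs, in exchange for making both parts essentially immediate. Two small presentational points: your "inductive monomial argument" for (i) is left vague and should either be dropped in favour of the graded-ring argument or written out, and the closing remark about $\fa^n:x\subseteq\bigcap_i(\cdots)$ is confusing and unnecessary, since the filtration argument you settle on is complete as it stands.
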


The next result is known. For the proof see [\ref{Mc}, Ch.VIII] and [\ref{P}, Theorem 2.2 ].

\begin{lemma} Let $m$ a non-negative integer. Then the following conditions hold.
\begin{itemize}
\item[(i)] $\widetilde{I^{m+1}}:x=\widetilde{I^m}$ for every superficial element $x\in I$.
\item[(ii)] $\widetilde{I^{m+1}}:J=\widetilde{I^m}$ for all minimal reduction $J$ of $I$.
\item[(iii)] $\widetilde{I^{m+1}}:I=\widetilde{I^m}$.
\item[(iv)] $J\widetilde{I^{m+1}}:x=\widetilde{I^{m+1}}$ for all minimal reduction $J$ of $I$ and all superficial element $x\in J$.
\end{itemize}
\end{lemma}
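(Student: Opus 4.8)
The plan is to treat all four colon identities together, extracting first a single multiplicative containment that supplies the ``easy'' inclusion in each. Concretely, I would first record the elementary fact that $\widetilde{I^a}\,I^b\subseteq\widetilde{I^{a+b}}$ for all $a,b\ge 0$: if $u\,I^j\subseteq I^{a+j}$ and $v\in I^b$, then $uv\,I^j\subseteq I^{a+b+j}$, so $uv\in\widetilde{I^{a+b}}$. Taking $b=1$ gives $\widetilde{I^m}\,I\subseteq\widetilde{I^{m+1}}$, hence $\widetilde{I^m}\,x\subseteq\widetilde{I^{m+1}}$ and $\widetilde{I^m}\,J\subseteq\widetilde{I^{m+1}}$ for every $x\in I$ and every reduction $J\subseteq I$. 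This already yields the inclusion $\supseteq$ in (i), (ii), (iii); for (iv) the inclusion $\widetilde{I^{m+1}}\subseteq J\widetilde{I^{m+1}}:x$ is immediate from $x\in J$, since $\widetilde{I^{m+1}}x\subseteq J\widetilde{I^{m+1}}$.

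For the reverse inclusion in (i) I would use superficiality directly. Suppose $yx\in\widetilde{I^{m+1}}$. Since the colon family defining $\widetilde{I^{m+1}}$ increases with the exponent, there is a $k$, which I may take as large as I like, with $yx\,I^k\subseteq I^{m+1+k}$; choosing $k\ge k_0$ forces $m+k\ge k_0$, so the superficial identity $I^{(m+k)+1}:x=I^{m+k}$ applies. Then for each $z\in I^k$ we have $(yz)x\in I^{m+1+k}$, whence $yz\in I^{m+1+k}:x=I^{m+k}$; letting $z$ range over $I^k$ gives $y\,I^k\subseteq I^{m+k}$, i.e. $y\in\widetilde{I^m}$. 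The only care needed is the bookkeeping that lets me push $k$ above $k_0$ while retaining $yx\,I^k\subseteq I^{m+1+k}$.

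Part (ii) I would prove directly rather than deduce from (i), which avoids having to assume the generators of $J$ are individually superficial. Write $J=(x_1,\dots,x_d)$ and take $y$ with $yJ\subseteq\widetilde{I^{m+1}}$. Choosing $k$ large enough that $yx_i\,I^k\subseteq I^{m+1+k}$ for every $i$ and simultaneously $k\ge r_J(I)$, I obtain $y\,I^{k+1}=y\,JI^k=\sum_i yx_i\,I^k\subseteq I^{m+1+k}$, so $y\in\widetilde{I^m}$. Part (iii) then drops out: since $J\subseteq I$ we have $\widetilde{I^{m+1}}:I\subseteq\widetilde{I^{m+1}}:J=\widetilde{I^m}$, and the reverse inclusion is the multiplicative fact above.

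The main obstacle, and the one genuinely new step, is (iv); I would reduce it to (i). Given $yx\in J\widetilde{I^{m+1}}$, multiply by $I^k$ for $k$ so large that $\widetilde{I^{m+1+k}}=I^{m+1+k}$ by Ratliff--Rush stabilization. Using $\widetilde{I^{m+1}}\,I^k\subseteq\widetilde{I^{m+1+k}}=I^{m+1+k}$ together with $J\subseteq I$, I get
\[
yx\,I^k\subseteq J\,\widetilde{I^{m+1}}\,I^k\subseteq J\,I^{m+1+k}\subseteq I^{m+2+k},
\]
so $yx\in\widetilde{I^{m+2}}$. Applying (i) with $m+1$ in place of $m$ then gives $y\in\widetilde{I^{m+2}}:x=\widetilde{I^{m+1}}$, exactly the reverse inclusion required. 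The delicate point is the passage through the stable range: one must invoke stabilization to replace $\widetilde{I^{m+1+k}}$ by $I^{m+1+k}$, so that the reduction $J\,I^{m+1+k}\subseteq I^{m+2+k}$ becomes available, and only then does the superficial identity of (i) close the argument.
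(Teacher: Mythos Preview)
Your argument is correct in all four parts. The paper itself does not supply a proof of this lemma; it simply records the statement as known and cites McAdam's lecture notes and Puthenpurakal's paper for the details. So there is no ``paper's own proof'' to compare against.

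What you have written is a clean, self-contained treatment. The multiplicative containment $\widetilde{I^a}\,I^b\subseteq\widetilde{I^{a+b}}$ is the right engine for the easy inclusions, and your handling of the hard inclusion in (i) via the eventual identity $I^{n+1}:x=I^n$ (valid here because $x$ is a regular superficial element in a Cohen--Macaulay ring) is exactly the standard route. Your direct proof of (ii), using $I^{k+1}=JI^k$ for $k\ge r_J(I)$ rather than assuming the generators of $J$ are individually superficial, is a nice touch and slightly more robust than reducing to (i). The reduction of (iv) to (i) through Ratliff--Rush stabilization $\widetilde{I^{m+1+k}}=I^{m+1+k}$ for $k\gg0$ is also correct and is essentially how the cited sources proceed.
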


\begin{lemma} Let $(R,\fm)$ be a CM local ring of dimension two and let $x_1, x_2$ be a superficial sequence on I with $J=(x_1,x_2)$. Then
$J^{n+1}\widetilde{I^{m}}:x_1=J^n\widetilde{I^m}$ for all $m,n\in\mathbb{N}$.
\end{lemma}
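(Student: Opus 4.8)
The plan is to prove the statement $J^{n+1}\widetilde{I^m}:x_1 = J^n\widetilde{I^m}$ by induction on $n$, using the $d=2$ structure heavily. For $n=0$ the assertion reads $J\widetilde{I^m}:x_1 = \widetilde{I^m}$, which is exactly Lemma~2.2(iv) applied with the minimal reduction $J$ and the superficial element $x_1\in J$ (note $x_1$ is superficial for $I$, hence for $J$ in the relevant sense). So the base case is free. One containment, $J^n\widetilde{I^m}\subseteq J^{n+1}\widetilde{I^m}:x_1$, is trivial since $x_1 J^n\widetilde{I^m}\subseteq J^{n+1}\widetilde{I^m}$; the content is the reverse inclusion.

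For the inductive step, suppose $y\in J^{n+1}\widetilde{I^m}:x_1$, i.e.\ $x_1 y\in J^{n+1}\widetilde{I^m}$. Since $d=2$ and $J=(x_1,x_2)$, I would write $J^{n+1}\widetilde{I^m} = x_1 J^n\widetilde{I^m} + x_2^{n+1}\widetilde{I^m} + \text{(mixed terms)}$, or more cleanly use that $J^{n+1} = x_1 J^n + x_2 J^n$, so $x_1 y = x_1 a + x_2 b$ with $a\in J^n\widetilde{I^m}$ and $b\in J^n\widetilde{I^m}$. Then $x_1(y-a) = x_2 b$. Because $x_1, x_2$ is a regular sequence (it is a superficial sequence that is a system of parameters in a CM ring), $x_1$ is a nonzerodivisor modulo nothing and $x_2$ modulo $x_1$ is a nonzerodivisor, so from $x_1(y-a)\in (x_2)$ we get $y - a \in (x_2)$: write $y - a = x_2 c$. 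Substituting back, $x_1 x_2 c = x_2 b$, and cancelling the nonzerodivisor $x_2$ gives $x_1 c = b \in J^n\widetilde{I^m}$. Thus $c\in J^n\widetilde{I^m}:x_1$, and by the induction hypothesis $c\in J^{n-1}\widetilde{I^m}$. Hence $y = a + x_2 c \in J^n\widetilde{I^m} + x_2 J^{n-1}\widetilde{I^m} \subseteq J^n\widetilde{I^m}$, which is what we want.

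The subtle point to check carefully is that the decomposition $x_1 y = x_1 a + x_2 b$ can be arranged with \emph{both} $a$ and $b$ in $J^n\widetilde{I^m}$ rather than in a larger module; this is fine because $x_1 y \in J^{n+1}\widetilde{I^m}$ and $J^{n+1}\widetilde{I^m} = x_1(J^n\widetilde{I^m}) + x_2(J^n\widetilde{I^m})$ as $R$-modules, since $J^{n+1} = x_1 J^n + x_2 J^n$ and multiplication distributes over the module $\widetilde{I^m}$. The other place needing a word of care is the cancellation steps: we use repeatedly that $x_1$ is a nonzerodivisor on $R$ and that $x_2$ is a nonzerodivisor on $R/x_1 R$, both guaranteed by the regular-sequence property of a superficial system of parameters in the CM ring $R$. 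I expect the main obstacle — really the only one — to be bookkeeping the module decomposition correctly and making sure the induction hypothesis is applied to $J^{n-1}$, not $J^n$; everything else is the standard regular-sequence manipulation already packaged in Lemma~2.1. An alternative, perhaps cleaner, route is to pass to the ring $R/x_1 R$ and argue that $J^{n+1}\widetilde{I^m}:x_1 / J^n\widetilde{I^m}$ injects into a module on which $x_2$ acts as a nonzerodivisor, forcing it to vanish; but the explicit computation above is self-contained and I would present that.
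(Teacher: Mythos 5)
Your proof is correct and is essentially the paper's own argument: the same induction on $n$, the same decomposition $x_1y = x_1a + x_2b$ with $a,b \in J^n\widetilde{I^m}$ (the paper's $\alpha_1,\alpha_2$), the same use of the regular sequence $x_1,x_2$ to write $y-a = x_2c$ and cancel $x_2$, and the same application of the induction hypothesis to place $c$ in $J^{n-1}\widetilde{I^m}$. The only cosmetic difference is that your justification for $y-a\in(x_2)$ should invoke $x_1$ being a nonzerodivisor modulo $(x_2)$ (via permutability of regular sequences in a local ring) rather than the other way around, a point the paper glosses over in the same way.
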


\begin{proof}
We will prove the claim by induction on $n$. The case $n=0$  follows by Lemma 2.2. Assume that $n\geq 1$ and clearly $J^n\widetilde{I^m}\subseteq J^{n+1}\widetilde{I^{m}}:x_1$. Suppose $yx_1\in J^{n+1}\widetilde{I^{m}}$; then we have $yx_1=\alpha_1x_1+\alpha_2x_2$ for some $\alpha_1,\alpha_2\in J^{n}\widetilde{I^{m}}$. Thus $x_1(y-\alpha_1)=x_2\alpha_2\in x_2J^{n}\widetilde{I^{m}}$ and since $x_1, x_2$ is a regular sequence, we obtain $y-\alpha_1=tx_2$ for some $t\in R$. Since $x_1(y-\alpha_1)=tx_1x_2\in x_2J^{n}\widetilde{I^{m}}$ and $x_2$ is a non-zerodivisor, it follows that $tx_1\in J^{n}\widetilde{I^{m}}$ and so $t\in J^{n}\widetilde{I^{m}}:x_1$. Therefore, by induction hypothesis, $t\in J^{n-1}\widetilde{I^{m}}$ and hence $y\in J^n\widetilde{I^{m}}$, as requird.
\end{proof}
The following result was proved by Rossi and Swanson [\ref{Rs}]. We reprove with a simplified proof.
\begin{proposition} Let $(R,\fm)$ be a CM local ring of dimension two and let $x_1, x_2$ be a superficial sequence on I with $J=(x_1,x_2)$. If $r_J(I)=m$, then $\widetilde{r_J(I)}\leq m$.
 \end{proposition}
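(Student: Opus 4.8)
The plan is to show, for every integer $n\ge m$, that $\widetilde{I^{n+1}}=J\widetilde{I^{n}}$; by definition this is exactly what $\widetilde{r_J(I)}\le m$ asserts. One of the two inclusions is routine and needs nothing beyond the definitions: if $a\in\widetilde{I^{n}}$, pick $k$ with $aI^{k}\subseteq I^{n+k}$, so that for any $c\in I$ we get $(ca)I^{k}=c(aI^{k})\subseteq cI^{n+k}\subseteq I^{n+1+k}$, whence $ca\in\widetilde{I^{n+1}}$; since $J\subseteq I$ this yields $J\widetilde{I^{n}}\subseteq I\widetilde{I^{n}}\subseteq\widetilde{I^{n+1}}$ (alternatively, quote Lemma 2.2(iii)). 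So the content is the reverse inclusion $\widetilde{I^{n+1}}\subseteq J\widetilde{I^{n}}$.

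Fix $n\ge m$ and $a\in\widetilde{I^{n+1}}$. First I would unwind the hypothesis $r_J(I)=m$: since $I^{j+1}=JI^{j}$ for all $j\ge m$, a trivial induction gives $I^{n+1+k}=J^{k+1}I^{n}$ for all $k\ge 0$. Next, using $\widetilde{I^{n+1}}=\cup_{k\ge 1}(I^{n+1+k}:I^{k})$, choose $k\ge 1$ with $aI^{k}\subseteq I^{n+1+k}$; then, because $x_1\in I$,
$$ax_1^{k}\in aI^{k}\subseteq I^{n+1+k}=J^{k+1}I^{n}\subseteq J^{k+1}\widetilde{I^{n}},$$
that is, $a\in\bigl(J^{k+1}\widetilde{I^{n}}:x_1^{k}\bigr)$.

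The decisive step is to cancel the factor $x_1^{k}$, and this is precisely the role of Lemma 2.3: applying it $k$ times,
$$J^{k+1}\widetilde{I^{n}}:x_1^{k}=\bigl(J^{k+1}\widetilde{I^{n}}:x_1\bigr):x_1^{k-1}=J^{k}\widetilde{I^{n}}:x_1^{k-1}=\cdots=J\widetilde{I^{n}},$$
so $a\in J\widetilde{I^{n}}$. Combining the two inclusions gives $\widetilde{I^{n+1}}=J\widetilde{I^{n}}$ for every $n\ge m$, hence $\widetilde{r_J(I)}\le m$. There is essentially a single idea here — push $a$ into $J^{k+1}\widetilde{I^{n}}$ after multiplying by $x_1^{k}$, then strip the $x_1^{k}$ back off using Lemma 2.3 — so I do not anticipate a real obstacle; the only points needing a little care are the identity $I^{n+1+k}=J^{k+1}I^{n}$ (valid precisely because $n\ge r_J(I)$) and checking that the exponents appearing in the iterated colon stay in the range where Lemma 2.3 applies. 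The genuine work has already been absorbed into Lemma 2.3.
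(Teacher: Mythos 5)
Your proof is correct and follows essentially the same route as the paper: express $\widetilde{I^{n+1}}$ by a colon formula for the Ratliff--Rush closure, use $r_J(I)=m$ to rewrite $I^{n+1+k}=J^{k+1}I^{n}$, and then strip off $x_1^{k}$ by iterating Lemma 2.3 to land in $J\widetilde{I^{n}}$. The only cosmetic difference is that you colon with $I^{k}$ (equivalently, with $x_1^{k}$ alone) where the paper colons with $(x_1^{k},x_2^{k})$, which changes nothing in substance.
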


\begin{proof} Let $n$ be an integer such that $n\geq m$. Since $\widetilde{I^{n+1}}=I^{n+k+1}:(x_1^k,x_2^k)$ for all large $k$, we have $\widetilde{I^{n+1}}=J^{k+1}I^n:(x_1^k,x_2^k)\subseteq J^{k+1}\widetilde{I^{n}}:x_1^k$. By using Lemma 2.3, we have $\widetilde{I^{n+1}}\subseteq J\widetilde{I^{n}}$ and clearly $J\widetilde{I^{n}}\subseteq\widetilde{I^{n+1}}$. It therefore follows that $\widetilde{I^{n+1}}=J\widetilde{I^{n}}$ for all $n\geq m$ and so $\widetilde{r_J(I)}\leq m$.
\end{proof}

\begin{remark} Let $(R,\fm)$ be a CM local ring of dimension two and $J$ a minimal reduction of $I$. If $r_J(I)=m$ and $\widetilde{I^m}=I^m$, then by Proposition 2.4, we have $\widetilde{I^n}=I^n$ for all $n\geq m$.
\end{remark}

Let $G=\bigoplus_{m\geq 0}G_m$ be a Notherian graded ring where $G_0$ is an Artinian local ring, $G$ is generated by $G_1$ over $G_0$ and $G_{+}=\bigoplus_{m>0}G_m$. Let $H_{G_{+}}^i(G)$ denote the i-th local cohomology module of $G$ with respect to the graded ideal $G_+$ and set $a_i(G)=\max\{m\vert\ \  H_{G_{+}}^i(G)_m\neq 0\}$ with the convention $a_i(G)=-\infty$, if $H_{G_{+}}^i(G)=0$. The Castelnuovo-Mumford regularity is defined by $reg (G):=\max\{a_i(G)+i\vert\ \ i\geq 0\}$.

The following result can be also concluded by [\ref{Drt}, Theorem 2.4]
\begin{proposition}
Let $(R,\fm)$ be a CM local ring of dimension two and $J$ a minimal reduction of $I$ such that $r_J(I)=m$. If $\widetilde{I^m}=I^m$, then $r_J(I)=reg(gr_I(R))$.
\end{proposition}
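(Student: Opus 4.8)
The plan is to squeeze $reg(G)$, $G:=gr_I(R)$, between two computable quantities: from below against the Artinian quotient of $G$ by a minimal reduction, and from above against the associated graded ring of the Ratliff--Rush filtration, where Proposition 2.4 becomes available.

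Since $I$ is $\fm$-primary and $\Dim R=2$ we have $\Dim G=2$, so $reg(G)=\max\{a_0(G),\,a_1(G)+1,\,a_2(G)+2\}$. Write $J=(x_1,x_2)$ with $x_1,x_2$ a superficial sequence (possible as $R/\fm$ is infinite); then $x_1^{*},x_2^{*}\in G_1$ are a homogeneous system of parameters, so $\overline G:=G/(x_1^{*},x_2^{*})G$ has finite length, with $\overline G_n=I^n/(JI^{n-1}+I^{n+1})$. For $n\ge m=r_J(I)$ one has $I^{n+1}=JI^n\subseteq JI^{n-1}$, so $\overline G_n=I^n/JI^{n-1}$, which is zero for $n\ge m+1$ and nonzero for $n=m$ (otherwise $r_J(I)\le m-1$). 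Hence $reg(\overline G)=a_0(\overline G)=m$. Since the initial forms of a superficial sequence form a filter-regular sequence on $G$, and regularity does not increase on quotienting by a filter-regular linear form (the standard long exact sequences in local cohomology), $reg(G)\ge reg(G/x_1^{*}G)\ge reg\bigl((G/x_1^{*}G)/\overline{x_2^{*}}(G/x_1^{*}G)\bigr)=reg(\overline G)=m$; this is the inequality $reg(G)\ge r_J(I)$.

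For the reverse inequality, I would pass to the associated graded ring $\widetilde G:=\bigoplus_{n\ge 0}\widetilde{I^{n}}/\widetilde{I^{n+1}}$ of the Ratliff--Rush filtration, a finitely generated graded $G$-module (it agrees with $G$ in all large degrees, so $\Dim\widetilde G=2$). By Remark 2.5, $\widetilde{I^{n}}=I^{n}$ for $n\ge m$, whence $\widetilde G_n=G_n$ there; moreover the kernel and cokernel of the natural map $G\to\widetilde G$ are, in degree $n$, $H^0_{G_+}(G)_n=(\widetilde{I^{n+1}}\cap I^{n})/I^{n+1}$ and $\widetilde{I^{n}}/(I^{n}+\widetilde{I^{n+1}})$, both of which vanish once $n\ge m-1$; in particular they are finite-length modules supported in degrees $\le m-1$. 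Chasing the exact sequences $0\to H^0_{G_+}(G)\to G\to K\to 0$ and $0\to K\to\widetilde G\to Q\to 0$ (with $Q$ the cokernel) through local cohomology yields $a_0(G)\le m-2$, $a_1(G)\le\max\{m-1,a_1(\widetilde G)\}$ and $a_2(G)=a_2(\widetilde G)$, so $reg(G)\le\max\{m,reg(\widetilde G)\}$. Finally, Lemma 2.2(i) says $\widetilde{I^{n+1}}:x_1=\widetilde{I^{n}}$ for all $n$, which is exactly the statement that $x_1^{*}$ is a nonzerodivisor on $\widetilde G$; hence $\depth_{\widetilde G_+}\widetilde G\ge 1=\Dim\widetilde G-1$, and the Ratliff--Rush filtration analogue of Marley's theorem gives $reg(\widetilde G)=\widetilde{r_J(I)}$, which is $\le r_J(I)=m$ by Proposition 2.4. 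Therefore $reg(G)\le m$, and combined with the previous paragraph $reg(G)=m=r_J(I)$.

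The step I expect to be the real obstacle is this last one: establishing, for the possibly non-standard-graded module $\widetilde G$, the identity $reg(\widetilde G)=\widetilde{r_J(I)}$ under $\depth_{\widetilde G_+}\widetilde G\ge\Dim\widetilde G-1$ --- the Ratliff--Rush filtration counterpart of ``$reg(gr_I(R))=r_J(I)$ when $\depth gr_I(R)\ge\Dim R-1$'' --- together with the attendant local-cohomology bookkeeping that transports it back to $G$ across the finite-length kernel and cokernel of $G\to\widetilde G$; this is precisely where Lemma 2.2 and Lemma 2.3 are used. Alternatively, the whole proposition drops out of [\ref{Drt}, Theorem 2.4], as remarked above.
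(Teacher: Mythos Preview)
Your lower bound $reg(G)\ge r_J(I)$ is correct and is essentially a direct reproof of Trung's inequality $r_J(I)\le reg(gr_I(R))$ [\ref{T}, Proposition~3.2]; the paper simply cites that result.

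For the upper bound, however, your route diverges sharply from the paper's and carries the gap you yourself flag. The paper never touches $\widetilde{G}$ or any comparison of local cohomology across $G\to\widetilde{G}$. Instead it argues in two lines: from Remark~2.5 one has $\widetilde{I^n}=I^n$ for all $n\ge m$, hence for every superficial $x\in I$ and every $n\ge m$,
\[
(x)\cap I^{n+1}=x\,(I^{n+1}:x)\subseteq x\,\widetilde{I^n}=x\,I^n,
\]
so $(x)\cap I^{n+1}=xI^n$; then [\ref{T1}, Proposition~4.7] gives $reg(gr_I(R))\le m$ directly. That is the whole proof.

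Your proposed substitute, ``$reg(\widetilde{G})=\widetilde{r_J(I)}$ because $\depth\widetilde{G}\ge\Dim\widetilde{G}-1$'', is not a result you can cite: $\widetilde{G}$ is only a finitely generated graded $G$-module (the Ratliff--Rush filtration is not multiplicatively standard, as the introduction warns), so Marley's theorem does not apply verbatim. The gap is real but fillable: with $\overline M=\widetilde{G}/x_1^{*}\widetilde{G}$, one can check, using $\widetilde{I^{n+2}}=J\widetilde{I^{n+1}}$ for $n\ge m$ together with Lemma~2.2(i), that $x_2^{*}$ acts injectively on $\overline M_n$ for $n\ge m$; this forces $a_0(\overline M)\le m-1$ and $a_1(\overline M)\le m-1$, hence $reg(\widetilde{G})=reg(\overline M)\le m$. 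A minor slip along the way: the cokernel of $G\to\widetilde{G}$ in degree $m-1$ is $\widetilde{I^{m-1}}/I^{m-1}$, which need not vanish, so it is concentrated in degrees $\le m-1$, not $\le m-2$; this does not affect your bound $a_0(Q)\le m-1$.

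In short, the paper's argument is a two-citation proof exploiting the superficial-element criterion for regularity, while your argument is a hands-on local-cohomology comparison that, as written, leaves the key module-theoretic analogue of Marley's theorem unproved.
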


\begin{proof} By using Remark 2.5, we have $\widetilde{I^n}=I^n$ for all $n\geq m$ and so for all superficial element $x\in I$, we have $(x)\cap I^{n+1}=xI^n$ for all $n\geq m$. Therefore by [\ref{T1}, Proposition 4.7], $reg(gr_I(R))\leq m$ and also by [\ref{T}, Proposition 3.2] we have $r_J(I)\leq reg(gr_I(R))$ (see also [\ref{M}, Lemma 1.2]). Hence $r_J(I)=reg(gr_I(R))$
\end{proof}

Let $(R,\fm)$ be a Noetherian local ring with $\Dim R>0$ and $I$ an ideal of $R$. A system of homogeneous elements $y^{*}_1,...,y^{*}_t$ in $gr_{I}(R)$ is called filter-regular sequence if and only if $$y^{*}_i\notin\bigcup_{\fp\in\Ass(gr_{I}(R)/{(y^{*}_1,...,y^{*}_{i-1})gr_{I}(R)})\setminus V(gr_{I}(R)_{+})}\fp$$ for $i=1,...,t$ (see [\ref{T}]). Trung in [\ref{T1}, Lemma 6.2] proved that $y_1,...,y_t$ is a superficial sequence of $I$ if and only if $y_1^{*},...,y_t^{*}$ forms a filter-regular sequence of $gr_{I}(R)$, where $y_i^{*}=y_i+{I}^2$.
A filter-regular $y^{*}_1,...,y^{*}_t$ in any order is called unconditioned filter-regular sequence.

\begin{lemma} Let $(R,\fm)$ be a CM local ring and $I$ an $\fm$-primary ideal of $R$. Then every minimal reduction $J$ of $I$ can be generated by a tame superficial sequence of $I$.
\end{lemma}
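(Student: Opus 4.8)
The plan is to pass to the fiber cone $F=F(I)=\bigoplus_{n\ge 0}I^n/\fm I^n$, where the conditions defining a tame superficial sequence become linear conditions over the infinite field $k=R/\fm$, and then to make a sufficiently general choice. Fix any minimal generating set $a_1,\dots,a_d$ of $J$; since $J$ is a minimal reduction it is generated by an $R$-regular sequence, which, being a reduction of the $\fm$-primary ideal $I$, has length $d=\dim R$, so $\mu(J)=d$. Let $\bar a_i\in F_1=I/\fm I$ denote the image of $a_i$. It is standard (see, e.g., [\ref{Hs}]) that, since $J$ is a reduction of $I$, the elements $\bar a_1,\dots,\bar a_d$ generate an $F_+$-primary ideal of the standard graded $k$-algebra $F$; as $\dim F=\ell(I)=d$ (because $I$ is $\fm$-primary), they form a homogeneous system of parameters of $F$. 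A homogeneous system of parameters of a standard graded algebra is $k$-linearly independent in degree $1$, so $\bar a_1,\dots,\bar a_d$ span a $d$-dimensional subspace $\bar J\subseteq F_1$; moreover $\bar J\not\subseteq\fp$ for every prime $\fp$ of $F$ with $F_+\not\subseteq\fp$, since $\bar J\subseteq\fp$ would force $F_+=\sqrt{(\bar a_1,\dots,\bar a_d)F}\subseteq\fp$.

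Next I reduce superficiality for $I$ to a statement inside $F$. Put $G=gr_I(R)$, so that $F=G/\fm G$. The ring $G$ and every quotient $G/(z_1,\dots,z_{i-1})G$ with $z_j\in G_1$ have degree-$0$ part equal to the Artinian local ring $R/I$, whose maximal ideal is nilpotent; hence $\fm$ acts nilpotently on all these graded rings, so $\fm G$ lies inside every prime of $G$ belonging to $\Ass(G)$ or to $\Ass(G/(b_1^{*},\dots,b_{i-1}^{*})G)$ (the latter viewed, as in the definition recalled above, as primes of $G$). For such a prime $\fp$ its image $\bar\fp$ in $F$ is again prime, and for $z\in G_1$ with image $\bar z\in F_1$ we have $z\in\fp\iff\bar z\in\bar\fp$, as well as $G_+\subseteq\fp\iff F_+\subseteq\bar\fp$. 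Combining this with the criterion of Trung quoted above, a $d$-element generating set $b_1,\dots,b_d$ of $J$ is a tame superficial sequence of $I$ provided that, for every $i$, the image $\bar b_i\in F_1$ avoids the finitely many primes of $F$ obtained in this way from the members of $\bigl(\Ass(G)\cup\Ass(G/(b_1^{*},\dots,b_{i-1}^{*})G)\bigr)\setminus\V(G_+)$, none of which contains $F_+$.

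Now build $b_1,\dots,b_d$ inductively. Suppose $b_1,\dots,b_{i-1}\in J$ have been chosen, where $i-1<d$. The primes of $F$ that $\bar b_i$ must avoid are finitely many and none contains $F_+$, hence by the first paragraph none contains $\bar J$, so each meets $\bar J$ in a proper $k$-subspace; likewise $\langle\bar b_1,\dots,\bar b_{i-1}\rangle\cap\bar J$ is a proper subspace because $i-1<d=\dim_k\bar J$. Since $k$ is infinite, $\bar J$ is not a union of finitely many proper subspaces, so we may pick $\bar b_i\in\bar J$ lying outside all of them; writing $\bar b_i=\sum_j\overline{c_{ij}}\,\bar a_j$ with $\overline{c_{ij}}\in k$, choosing lifts $c_{ij}\in R$, and putting $b_i=\sum_j c_{ij}a_j\in J$ completes the step. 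After $d$ steps $\bar b_1,\dots,\bar b_d$ is a basis of $\bar J$, so $(\overline{c_{ij}})\in GL_d(k)$; hence the images of $b_1,\dots,b_d$ in $J/\fm J$ form a basis, so $(b_1,\dots,b_d)=J$ by Nakayama, and $\bar b_i$ is exactly the image of $b_i$ in $F_1$. By the previous paragraph, $b_1,\dots,b_d$ is a tame superficial sequence of $I$ generating $J$, which is what was claimed.

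The step requiring the most care is the middle one: verifying that $\fm G$, and its analogues in the quotients $G/(b_1^{*},\dots,b_{i-1}^{*})G$, lies in every associated prime in sight, so that all of the conditions defining a tame superficial sequence for $I$ can be detected on the images $\bar b_i$ in the genuine finite-dimensional $k$-vector space $F_1$. Once this descent to the fiber cone is established, the rest is the standard argument that over an infinite field a general vector avoids any finite collection of proper linear subspaces.
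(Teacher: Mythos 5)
Your proof is correct, but it takes a genuinely different route from the paper's. The paper starts from the standard fact that $J$ is generated by some superficial sequence $y_1,\dots,y_d$, translates this via Trung's criterion into a filter-regular sequence of initial forms in $gr_I(R)$, and then invokes the Tajarod--Zakeri result on unconditioned filter-regular sequences ([\ref{Tz}, Proposition 1.2]) to replace these by generators $z_1^{*},\dots,z_d^{*}$ of the same ideal of initial forms which are filter-regular in every order, concluding that $z_1,\dots,z_d$ is a tame superficial sequence generating $J$. You instead give a self-contained general-position argument: since $I$ is $\fm$-primary, $\fm\, gr_I(R)$ is a nilpotent ideal, so all the filter-regularity conditions are detected in the fiber cone $F=gr_I(R)/\fm\, gr_I(R)$; the images of generators of $J$ span a $d$-dimensional subspace of $F_1$ generating an $F_+$-primary ideal, hence this subspace is contained in none of the finitely many primes coming from $\bigl(\Ass(gr_I(R))\cup\Ass(gr_I(R)/(b_1^{*},\dots,b_{i-1}^{*}))\bigr)\setminus\V(gr_I(R)_+)$; prime avoidance over the infinite residue field then produces, inductively, elements of $J$ whose initial forms avoid everything at once, and Nakayama shows they generate $J$. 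Tameness comes for free because $\Ass(gr_I(R))\setminus\V(gr_I(R)_+)$ is on the avoidance list at every step. What your route buys is independence from the Tajarod--Zakeri citation and from the prior choice of a superficial generating sequence, plus a transparent verification that the new elements again generate $J$ (a point the paper's final sentence treats rather quickly); what the paper's route buys is brevity and the formally stronger unconditioned property. Both arguments pivot on the same translation, namely Trung's equivalence between superficial sequences and filter-regular sequences of initial forms.
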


\begin{proof}
Let $J$ be a minimal reduction of $I$. Since the residue field is infinite, it is well known that there exists a superficial sequence $y_1,..,y_d$ in $I$ such that $J=(y_1,..,y_d)$, see for instance section 8.6 in [\ref{Hs}]. Since $y^{*}_1,...,y^{*}_d$ forms a filter-regular sequence in $gr_I(R)$, then by
[\ref{Tz}, Proposition 1.2] $Q=(y^{*}_1,...,y^{*}_d)$ admits a system of generators of length $d$, say $z^{*}_1,...,z^{*}_d$, which forms an unconditional $Q$-filter-regular sequence. Since $z_1,...,z_d$ is in particular a maximal superficial sequence in the minimal reduction $J$, it follows that $J=(z_1,...,z_d)$ is generated by a tame superficial sequence.
\end{proof}

\begin{lemma} Let $(R,\fm)$ be a CM local ring of dimension $d\geq 3$, $I$ an $\fm$-primary ideal and $x,x_1,...,x_s$ a tame superficial sequence of $I$. If $\fa=(x_1,...,x_s)$, then ${\fa}^n\widetilde{I^{m+1}}:x={\fa}^n\widetilde{I^m}$ for all non-negative integers $m,n$.
\end{lemma}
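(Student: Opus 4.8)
The plan is to induct on $n$. The inclusion $\fa^n\widetilde{I^m}\subseteq\fa^n\widetilde{I^{m+1}}:x$ comes for free, since $x\widetilde{I^m}\subseteq I\widetilde{I^m}\subseteq\widetilde{I^{m+1}}$ by Lemma 2.2(iii). For the reverse inclusion, the base case $n=0$ is exactly Lemma 2.2(i). So fix $n\geq 1$, assume the statement for $n-1$ (and all exponents), and take $y$ with $xy\in\fa^n\widetilde{I^{m+1}}$; the goal is $y\in\fa^n\widetilde{I^m}$.

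First I would record two facts about $y$. Since $x,x_1,\dots,x_s$ is an $R$-regular sequence (a superficial sequence of length $\leq d$ in a Cohen--Macaulay local ring is a regular sequence), one has $gr_{\fa}(R)\cong(R/\fa)[T_1,\dots,T_s]$; hence every $\fa^j/\fa^{j+1}$ is free over $R/\fa$, and as $x$ is a nonzerodivisor modulo $\fa$ it is a nonzerodivisor on each $\fa^j/\fa^{j+1}$, hence on $R/\fa^n$. From $xy\in\fa^n\widetilde{I^{m+1}}\subseteq\fa^n$ we get $y\in\fa^n$. Second, $\fa^n\widetilde{I^{m+1}}=\fa^{n-1}\bigl(\fa\widetilde{I^{m+1}}\bigr)\subseteq\fa^{n-1}\widetilde{I^{m+2}}$, since $\fa\widetilde{I^{m+1}}\subseteq I\widetilde{I^{m+1}}\subseteq\widetilde{I^{m+2}}$ (Lemma 2.2(iii) again). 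Thus $xy\in\fa^{n-1}\widetilde{I^{m+2}}$, and the induction hypothesis applied with $n-1$ and $m+1$ gives $y\in\fa^{n-1}\widetilde{I^{m+1}}$.

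Now combine: $y\in\fa^{n-1}\widetilde{I^{m+1}}\cap\fa^n$. Write $y=\sum_{|\mu|=n-1}x^{\mu}w_{\mu}$ with $w_{\mu}\in\widetilde{I^{m+1}}$, where $x^{\mu}$ runs over the degree-$(n-1)$ monomials in $x_1,\dots,x_s$. Because these monomials form an $R/\fa$-basis of $\fa^{n-1}/\fa^n$, the membership $y\in\fa^n$ forces $w_{\mu}\in\fa$ for every $\mu$, i.e. $w_{\mu}\in\widetilde{I^{m+1}}\cap\fa$. Granting the Claim that $\widetilde{I^{m+1}}\cap\fa=\fa\widetilde{I^m}$ for every $m\geq 0$, we then get $w_{\mu}=\sum_i x_iw_{\mu,i}$ with $w_{\mu,i}\in\widetilde{I^m}$, so $y=\sum_{\mu,i}(x^{\mu}x_i)w_{\mu,i}\in\fa^n\widetilde{I^m}$, which closes the induction.

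So everything reduces to the Claim, and this is where I expect the work to concentrate. The inclusion $\fa\widetilde{I^m}\subseteq\widetilde{I^{m+1}}\cap\fa$ is clear from $\fa\subseteq I$ and Lemma 2.2(iii); for the reverse I would induct on $s$. The case $s=1$ is immediate from Lemma 2.2(i): $\widetilde{I^{m+1}}\cap(x_1)=x_1\bigl(\widetilde{I^{m+1}}:x_1\bigr)=x_1\widetilde{I^m}$. For $s\geq 2$, write $\fa=\fb+(x_s)$ with $\fb=(x_1,\dots,x_{s-1})$; then $R/\fb$ is Cohen--Macaulay of dimension $\geq 2$, the image of $x_s$ is superficial for the image of $I$, and $\fb\cap(x_s)=x_s\fb$. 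Given $z\in\widetilde{I^{m+1}}\cap\fa$, write $z=b+x_st$ with $b\in\fb$; reducing modulo $\fb$ and applying Lemma 2.2(i) in $R/\fb$ puts the class of $t$ in $\widetilde{(I/\fb)^m}$, and then a lift $v\in\widetilde{I^m}$ of $t$ together with the inductive hypothesis for $\fb$ (applied to $b+x_s(t-v)\in\widetilde{I^{m+1}}\cap\fb=\fb\widetilde{I^m}$) gives $z\in\fb\widetilde{I^m}+x_s\widetilde{I^m}=\fa\widetilde{I^m}$. The delicate step, and the one I expect to be the main obstacle, is precisely the compatibility of Ratliff--Rush closure with reduction modulo $\fb$ — that the preimage of $\widetilde{(I/\fb)^m}$ in $R$ equals $\widetilde{I^m}+\fb$ — so that such a lift $v$ can genuinely be chosen inside $\widetilde{I^m}$; controlling this (for tame superficial sequences) is what makes the Claim, and hence the whole lemma, work.
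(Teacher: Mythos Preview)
Your outer induction on $n$ and the reduction to the Claim $\widetilde{I^{m+1}}\cap\fa=\fa\widetilde{I^m}$ are clean; the use of $\fa^n:x=\fa^n$ and of the inductive step $\fa^{n-1}\widetilde{I^{m+2}}:x=\fa^{n-1}\widetilde{I^{m+1}}$ is correct, and the freeness of $\fa^{n-1}/\fa^n$ over $R/\fa$ does force each $w_\mu\in\fa$. The problem is exactly where you put your finger: the inductive proof of the Claim via passage to $R/\fb$ needs the surjectivity of $\widetilde{I^m}\to\widetilde{(I/\fb)^m}$, i.e.\ that the preimage of $\widetilde{(I/\fb)^m}$ is $\widetilde{I^m}+\fb$. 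This is \emph{not} a standard fact, it is known to be delicate even for a single superficial element, and nothing in the tameness hypothesis gives it to you. Tameness tells you each $x_i$ is superficial for $I$ in $R$; it says nothing about lifting Ratliff--Rush closures from $R/\fb$. So as written the argument has a genuine gap, and it is not one that can be patched by a routine reference.

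The paper's proof avoids quotient rings entirely. It shares your first move (induction on $n$, with $\fa^n\widetilde{I^{m+1}}:x\subseteq\fa^{n-1}\widetilde{I^{m+1}}$ via the hypothesis at level $n-1$), but then, instead of intersecting with $\fa^n$ and invoking your Claim, it proves by a second induction on $r$ that
\[
(x_1,\dots,x_r)^{n-1}\widetilde{I^{m+1}}\cap\bigl(\fa^n\widetilde{I^{m+1}}:x\bigr)\subseteq\fa^n\widetilde{I^m}\qquad(0\le r\le s).
\]
For the inductive step one writes $y=\alpha+x_r\beta$ with $\alpha\in(x_1,\dots,x_{r-1})^{n-1}\widetilde{I^{m+1}}$ and $\beta\in(x_1,\dots,x_r)^{n-2}\widetilde{I^{m+1}}$, sets $\fb=(x_1,\dots,\widehat{x_r},\dots,x_s)$, and uses the decomposition $\fa^n=\fb^n+x_r\fa^{n-1}$ together with the induction hypothesis on $n$ applied with $x_r$ in the role of $x$ and $\fb$ in the role of $\fa$ (this is where tameness is actually used: any reordering still gives a sequence of elements each superficial for $I$). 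One obtains $\beta\in\fa^{n-1}\widetilde{I^m}$, whence $\alpha$ falls under the induction hypothesis on $r$. Everything happens inside $R$; no comparison of Ratliff--Rush closures across a quotient map is needed.
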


\begin{proof} We proceed by induction on $n$. The case $n=0$ is trivial. Now we assume that $n\geq 1$ and by induction hypothesis ${\fa}^n\widetilde{I^{m+1}}:x\subseteq{\fa}^{n-1}\widetilde{I^{m+2}}:x={\fa}^{n-1}\widetilde{I^{m+1}}$. So that the argument is finished once we prove the following:\\
$(x_1,...,x_r)^{n-1}\widetilde{I^{m+1}}\cap({\fa}^n\widetilde{I^{m+1}}:x)\subseteq {\fa}^n\widetilde{I^m}$ for all $r$ $(0\leq r\leq s)$.
Again, we proceed by induction on $r$. For $r=0$, we take $(x_1,...,x_r)=0$. Assume $r\geq 1$ and $y$ an element of $(x_1,...,x_r)^{n-1}\widetilde{I^{m+1}}\cap({\fa}^n\widetilde{I^{m+1}}:x)$. We can put $y=\alpha+x_r\beta$, where $\alpha\in(x_1,...,x_{r-1})^{n-1}\widetilde{I^{m+1}}$  and $\beta\in (x_1,...,x_r)^{n-2}\widetilde{I^{m+1}}$. Now let $\fb=(x_1,...,\widehat{x_r},...,x_s)$ and then $xy=x\alpha+xx_r\beta\in{\fa}^n\widetilde{I^{m+1}}={\fb}^n\widetilde{I^{m+1}}+x_r{\fa}^{n-1}\widetilde{I^{m+1}}$. Thus we can find an element $z$ of ${\fa}^{n-1}\widetilde{I^{m+1}}$ such that $xy-x_rz=x\alpha+x_r(x\beta-z)\in{\fb}^n\widetilde{I^{m+1}}$. Since ${\fb}^n\widetilde{I^{m+1}}\subseteq{\fb}^{n-1}\widetilde{I^{m+2}}$ and $\alpha\in{\fb}^{n-1}\widetilde{I^{m+1}}$, we get $x\alpha\in{\fb}^{n-1}\widetilde{I^{m+2}}$. Hence $x_r(x\beta-z)\in{\fb}^{n-1}\widetilde{I^{m+2}}$ and by induction hypothesis on $n$, we have $x\beta-z\in{\fb}^{n-1}\widetilde{I^{m+1}}$ and so $x\beta\in{\fa}^{n-1}\widetilde{I^{m+1}}$ (as ${\fb}^{n-1}\widetilde{I^{m+1}}\subseteq{\fa}^{n-1}\widetilde{I^{m+1}}$). Again, by induction hypothesis on $n$, we have $\beta\in{\fa}^{n-1}\widetilde{I^{m}}$. Therefore $x\alpha=xy-xx_r\beta\in{\fa}^n\widetilde{I^{m+1}}$ and so $\alpha\in({\fa}^n\widetilde{I^{m+1}}:x)\cap(x_1,...,x_{r-1})^{n-1}\widetilde{I^{m+1}}$. Thus by induction hypothesis on $r$, we have $\alpha\in{\fa}^n\widetilde{I^{m}}$ so that $y=\alpha+x_r\beta$ is contained in ${\fa}^n\widetilde{I^{m}}$, as desired.
\end{proof}

\begin{lemma} Let $(R,\fm)$ be a CM local ring of dimension $d\geq 3$, $I$ an $\fm$-primary ideal, and $x_1,x_2,...,x_d$ a tame superficial sequence of $I$.
If $J=(x_1,x_2,...,x_d)$, then $J^{n+1}\widetilde{I^{m}}:x_1=J^n\widetilde{I^{m}}$ for all non-negative integers $m,n$.
\end{lemma}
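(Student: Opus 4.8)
The inclusion $J^{n}\widetilde{I^{m}}\subseteq J^{n+1}\widetilde{I^{m}}:x_1$ is immediate, so only the reverse inclusion needs to be proved, and the plan is to peel off the principal part of $J$ and thereby reduce the assertion to a colon computation already supplied by Lemma 2.8. Set $\fb=(x_2,\dots,x_d)$, so that $J=(x_1)+\fb$ and hence $J^{n+1}=x_1J^{n}+\fb^{n+1}$, whence $J^{n+1}\widetilde{I^{m}}=x_1J^{n}\widetilde{I^{m}}+\fb^{n+1}\widetilde{I^{m}}$. Given $y\in R$ with $yx_1\in J^{n+1}\widetilde{I^{m}}$, I would write $yx_1=x_1\gamma+\delta$ with $\gamma\in J^{n}\widetilde{I^{m}}$ and $\delta\in\fb^{n+1}\widetilde{I^{m}}$; then $x_1(y-\gamma)=\delta\in\fb^{n+1}\widetilde{I^{m}}$, that is, $y-\gamma\in(\fb^{n+1}\widetilde{I^{m}}):x_1$. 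Since $\gamma$ already lies in $J^{n}\widetilde{I^{m}}$, the whole lemma then follows from the inclusion
$$(\fb^{n+1}\widetilde{I^{m}}):x_1\subseteq J^{n}\widetilde{I^{m}}.$$

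To establish this last inclusion when $m\ge 1$, I would apply Lemma 2.8 to the tame superficial sequence $x_1,x_2,\dots,x_d$ of $I$, with $x_1$ in the role of the distinguished element and $\fb=(x_2,\dots,x_d)$ in the role of $\fa$, and with $(n+1,\,m-1)$ substituted for the index pair $(n,m)$; this gives exactly $(\fb^{n+1}\widetilde{I^{m}}):x_1=\fb^{n+1}\widetilde{I^{m-1}}$. Because $\fb\subseteq I$ one has $\fb\widetilde{I^{m-1}}\subseteq I\widetilde{I^{m-1}}\subseteq\widetilde{I^{m}}$ — the standard containment that is also used in the proof of Lemma 2.8 — so
$$\fb^{n+1}\widetilde{I^{m-1}}=\fb^{n}\bigl(\fb\widetilde{I^{m-1}}\bigr)\subseteq\fb^{n}\widetilde{I^{m}}\subseteq J^{n}\widetilde{I^{m}},$$
which is what is wanted. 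For the degenerate case $m=0$, where $\widetilde{I^{0}}=R$ and the inclusion to be proved is $\fb^{n+1}:x_1\subseteq J^{n}$, I would instead use Lemma 2.1(ii) for the $R$-regular sequence $x_1,x_2,\dots,x_d$ with $\fa=\fb$, obtaining $\fb^{n+1}:x_1=\fb^{n+1}\subseteq\fb^{n}\subseteq J^{n}$. In either case $y-\gamma\in J^{n}\widetilde{I^{m}}$, and therefore $y\in J^{n}\widetilde{I^{m}}$.

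The one genuine idea here is the reduction in the first paragraph: once the summand $x_1J^{n}\widetilde{I^{m}}$ has been split off, the remaining content is precisely the colon identity for the (now non-principal) ideal $\fb$ that Lemma 2.8 was designed to provide, together with the trivial containment $\fb\widetilde{I^{m-1}}\subseteq\widetilde{I^{m}}$; no new induction is required. This is also the point at which the situation differs from the two-dimensional Lemma 2.3, where $\fb=(x_2)$ is principal and the corresponding colon by $x_1$ is handled by the direct induction carried out there, rather than by citing Lemma 2.8. The only thing one must be a little careful about is the boundary index $m=0$, which falls outside the scope of Lemma 2.8 and has to be disposed of separately via Lemma 2.1; everything else is formal bookkeeping.
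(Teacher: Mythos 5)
Your proof is correct and takes essentially the same route as the paper's: both split $J^{n+1}\widetilde{I^{m}}$ as $x_1J^{n}\widetilde{I^{m}}+(x_2,\dots,x_d)^{n+1}\widetilde{I^{m}}$ and reduce the colon by $x_1$ to Lemma 2.8 applied to $(x_2,\dots,x_d)$ with the shift $m\mapsto m-1$, together with the containment $(x_2,\dots,x_d)^{n+1}\widetilde{I^{m-1}}\subseteq J^{n}\widetilde{I^{m}}$ (the paper just phrases the decomposition via $x_1(J^{n+1}\widetilde{I^{m}}:x_1)=J^{n+1}\widetilde{I^{m}}\cap(x_1)$ and the modular law, then cancels the nonzerodivisor $x_1$). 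Your separate treatment of the boundary case $m=0$ via Lemma 2.1 is a small point the paper's proof leaves implicit, but otherwise the arguments coincide.
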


\begin{proof}
Let us proceed by induction on $n$. The case $n=0$ follows by Lemma 2.2. Let $n\geq 1$. We have $x_1(J^{n+1}\widetilde{I^{m}}:x_1)=J^{n+1}\widetilde{I^{m}}\cap(x_1)=(J_1^{n+1}\widetilde{I^{m}}+x_1J^n\widetilde{I^{m}})\cap(x_1)=
J_1^{n+1}\widetilde{I^{m}}\cap(x_1)+x_1J^n\widetilde{I^{m}}=x_1(J_1^{n+1}\widetilde{I^{m}}:x_1)+x_1J^n\widetilde{I^{m}}$, where $J_1=(x_2,...,x_d)$.
Therefore by using Lemma 2.8, we have $x_1(J^{n+1}\widetilde{I^{m}}:x_1)=x_1J_1^{n+1}\widetilde{I^{m-1}}+x_1J^n\widetilde{I^{m}}=x_1J^n\widetilde{I^{m}}$ (as $J_1^{n+1}\widetilde{I^{m-1}}\subseteq J^n\widetilde{I^{m}}$). Hence $J^{n+1}\widetilde{I^{m}}:x_1=J^n\widetilde{I^{m}}$, as desired.

\end{proof}

\begin{theorem} Let $(R,\fm)$ be a CM local ring of dimension $d\geq 3$, $I$ an $\fm$-primary ideal $x_1,x_2,...,x_d$ a tame superficial sequence of $I$.
If $J=(x_1,x_2,...,x_d)$, then $\widetilde{r_J(I)}\leq r_J(I)$.

\end{theorem}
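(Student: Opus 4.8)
The plan is to run the same argument that deduces Proposition 2.4 from Lemma 2.3, now using the higher-dimensional colon identity of Lemma 2.9 in place of Lemma 2.3. Set $m=r_J(I)$ and fix an arbitrary integer $n\geq m$; the goal is to show $\widetilde{I^{n+1}}=J\widetilde{I^{n}}$, which (for all such $n$) is exactly the assertion $\widetilde{r_J(I)}\leq m$.

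First I would recall from the introduction that, since $x_1,\dots,x_d$ is a system of parameters contained in $I$, one has $\widetilde{I^{n+1}}=\bigcup_{k\geq 1}\bigl(I^{n+1+k}:(x_1^{k},\dots,x_d^{k})\bigr)$; as this is an increasing union of ideals in the Noetherian ring $R$, there is a $k\gg 0$ with $\widetilde{I^{n+1}}=I^{n+1+k}:(x_1^{k},\dots,x_d^{k})$. Because $n\geq m=r_J(I)$, each of $n,n+1,\dots,n+k$ is $\geq m$, so $I^{n+1+k}=J^{k+1}I^{n}$. Hence
$$\widetilde{I^{n+1}}\ \subseteq\ I^{n+1+k}:x_1^{k}\ =\ J^{k+1}I^{n}:x_1^{k}\ \subseteq\ J^{k+1}\widetilde{I^{n}}:x_1^{k},$$
the last inclusion because $I^{n}\subseteq\widetilde{I^{n}}$.

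Next I would iterate Lemma 2.9, which applies since $J=(x_1,\dots,x_d)$ is generated by a tame superficial sequence: by induction on $k$ (the case $k=0$ being trivial), $J^{k+1}\widetilde{I^{n}}:x_1^{k}=\bigl(J^{k+1}\widetilde{I^{n}}:x_1\bigr):x_1^{k-1}=J^{k}\widetilde{I^{n}}:x_1^{k-1}=J\widetilde{I^{n}}$, using Lemma 2.9 for the middle equality and the induction hypothesis for the last. Thus $\widetilde{I^{n+1}}\subseteq J\widetilde{I^{n}}$, while the reverse inclusion always holds — for instance $J\widetilde{I^{n}}\subseteq I\widetilde{I^{n}}\subseteq\widetilde{I^{n+1}}$, the last containment being Lemma 2.2(iii). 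Therefore $\widetilde{I^{n+1}}=J\widetilde{I^{n}}$ for every $n\geq m$, which gives $\widetilde{r_J(I)}\leq m=r_J(I)$.

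Since Lemmas 2.8 and 2.9 carry the genuine technical weight (the dimension induction and the repeated use of the regular-sequence colon relations), no serious obstacle remains here; the only points requiring care are that the Ratliff–Rush union stabilizes — a finite-generation argument — so that a single large $k$ works for the chosen $n$, and that Lemma 2.9 must be applied through an induction on the exponent $k$ rather than invoked in one step.
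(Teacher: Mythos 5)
Your proposal is correct and follows essentially the same route as the paper: express $\widetilde{I^{n+1}}$ as a stabilized colon $I^{n+1+k}:(x_1^k,\dots,x_d^k)$, use $r_J(I)=m$ to replace $I^{n+1+k}$ by $J^{k+1}I^n$, and then collapse $J^{k+1}\widetilde{I^n}:x_1^k$ to $J\widetilde{I^n}$ via Lemma 2.9. In fact you are slightly more careful than the paper, which only writes the argument for $n=m$ and invokes Lemma 2.9 in one stroke, whereas you handle every $n\geq m$ (as the definition of $\widetilde{r_J(I)}$ requires) and make the induction on the exponent $k$ explicit.
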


\begin{proof} Let us write $r_J(I)=m$ and we prove $\widetilde{I^{m+1}}=J\widetilde{I^m}$. For large $k$, we have $\widetilde{I^{m+1}}=I^{m+k+1}:(x_1^k,x_2^k,...,x_d^k)$, in particular $\widetilde{I^{m+1}}=J^{k+1}I^m:(x_1^k,x_2^k,...,x_d^k)$ as $I^{m+n}=J^nI^m$ for all non-negative integers $n$.
Therefore by using Lemma 2.9, we have $\widetilde{I^{m+1}}=J^{k+1}I^m:(x_1^k,...x_d^k)\subseteq J^{k+1}\widetilde{I^{m}}:x_1^k=J\widetilde{I^{m}}$. Hence $\widetilde{I^{m+1}}\subseteq J\widetilde{I^{m}}$ and so $\widetilde{I^{m+1}}=J\widetilde{I^{m}}$. This complete the proof.
\end{proof}

\begin{remark} Let $(R,\fm)$ be a CM local ring of dimension $d\geq 3$, $x_1,x_2,...,x_d$ a tame superficial sequence of $I$ and $J=(x_1,x_2,...,x_d)$. If $r_J(I)=m$ and $\widetilde{I^m}=I^m$, then by Theorem 2.10, we have $\widetilde{I^n}=I^n$ for all $n\geq m$.
\end{remark}

The following example shows that $I^m$ is not necessarily Ratliff-Rush closed even if $m\geq r_J(I)$.
The computations are performed by using Maxaulay2 [\ref{Gs}].

\begin{example} Let $R=k[\![ x,y ]\!]$, where $k$ be a field and $I=(x^7,x^6y,x^2y^5,y^7)$. Then $r(I)=3$ and $x^{17}y^4\in I^4:I\setminus I^3$. Hence $\widetilde{I^3}\neq I^3$.

\end{example}

The following example show that the inequality in Theorem 2.10 can be strict.

\begin{example}
Let $R=k[\![ x,y ]\!]$, where $k$ be a field and $I=(x^4,x^3y,xy^3,y^4)$. Then $r(I)=2$, $e_2(I)=0$ and so by Huckaba and Marley[\ref{Hm}, Corollary 4.13] $\widetilde{r_J(I)}\leq 1$ for all minimal reduction $J$ of $I$, where $e_2(I)$ is the second Hilbert coefficients.
\end{example}

\begin{acknowledgement} This paper was done while I was visiting the University of Osnabruck. I would like to thank the Institute of Mathematics of the University of Osnabruck for hospitality. Also, I would like to express my deep gratitude to Professor Louis Ratliff and Professor Tony Puthenpurakal for valuable suggestions. Finally, I would like grateful to the referee for the careful reading of the manuscript and the helpful suggestions.
\end{acknowledgement}

\end{document}